\newtheorem{thm}{Theorem}[section]
\newtheorem*{theorem A}{Theorem A}
\newtheorem*{theorem B}{N\"olker's Theorem}
\theoremstyle{remark}
\theoremstyle{remark}
\theoremstyle{definition}
\numberwithin{equation}{section}
\def\({\left ( }
    \def\){\right )}
\def\<{\left < }
\def\>{\right >}
\newcommand{\ie}{i.e. }
\newcommand{\s}{\mathfrak{S}}
\newcommand{\n}{\nabla}
\newcommand{\z}{\mathfrak{z}}
\newcommand{\W}{\mathcal{W}}
\newcommand{\MM}{\mathcal{M}}
\newcommand{\ea}{\varepsilon_\alpha}
\newcommand{\ta}{\theta}
\newcommand{\gm}{\gamma}
\newcommand{\al}{\alpha}
\newcommand{\bt}{\beta}
\begin{document}

\vspace{2cm}

\title[Almost hypercomplex manifolds with Hermitian-Norden metrics \dots]
{Almost hypercomplex manifolds with Hermitian-Norden metrics and 4-dimensional indecomposable real Lie algebras depending on two parameters}

\author{Hristo Manev}
\address{Medical University -- Plovdiv, Faculty of Public Health,
Department of Medical Informatics, Biostatistics and e-Learning,   15-A Vasil Aprilov
Blvd.,   Plovdiv 4002,   Bulgaria;}
\email{hristo.manev@mu-plovdiv.bg}

\subjclass[2010]{Primary: 22E60, 22E15, 53C15, 53C50; Secondary: 22E30, 53C55}



\keywords{Almost hypercomplex structure, Hermitian metric, Norden metric, Lie group, Lie algebra}

\begin{abstract}
The object of investigations are almost hypercomplex structures with Hermitian-Norden metrics on 4-dimensional Lie groups considered as smooth manifolds. There are studied both the basic classes of a classification of 4-dimensional indecomposable real Lie algebras depending on two parameters. Some geometric characteristics of the respective almost hypercomplex manifolds with Hermitian-Norden metrics are obtained.\thanks{The author was supported by Project MU19-FMI-020 of the Scientific Research Fund, University of Plovdiv, Bulgaria and National Scientific Program ''Young Researchers and Post-Doctorants''}
\end{abstract}
\maketitle
\section*{Introduction}

The almost hypercomplex structure $H$ on a $4n$-dimensional manifold $\MM^{4n}$ is a triad of anticommuting almost complex structures
such that each of them is a composition of the two other structures. The introduced structure $H$ is equipped with a metric structure of Hermitian-Norden type,
generated by a pseudo-Riemannian metric $g$ of neutral signature
\cite{GriMan24,GriManDim12}. In the considered case, one of the almost complex structures of $H$ acts as an isometry with respect to $g$ in each tangent fibre and the other two act as anti-isometries.
Therefore, there exist three (0,2)-tensors
associated with $H$ to the metric $g$: a K\"ahler form and two metrics of the Hermitian-Norden type.
In the considered case, on the one hand $g$ is a Hermitian metric with respect to $J_1$ and on the other hand $g$ is a Norden metric regarding $J_2$ and $J_3$. The derived structure is called an almost hypercomplex structure with Hermitian-Norden metrics.

The geometry of the considered manifolds is investigated in \cite{GriMan24,GriManDim12,Ma05,Man28,ManGri32,ManSek,NakHMan}. This type of manifolds are the only possible way to involve Norden-type metrics on almost hypercomplex manifolds.
In \cite{Barb,Fino,HM5,ManTav2,Ovando,ZamNak}, there are studied similar structures and metrics on Lie groups considered as manifolds.

The present paper is organized as follows. In
Sect.~\ref{sect-prel} we recall
some facts about the almost hypercomplex manifold with Hermitian-Norden  metrics. %
In Sect.~\ref{sect-lie} we construct almost hypercomplex structure with Hermitian-Norden metrics on 4-dimensional Lie groups considered as smooth manifolds.
The last Sect.~\ref{sect-45} and Sect.~\ref{sect-46} are devoted to the study of some geometric characteristics of both of the basic classes of a classification of 4-dimensional indecomposable real Lie algebras depending on two parameters.

\section{Preliminaries}\label{sect-prel}

\subsection{Almost hypercomplex manifolds with Hermitian-Norden metrics}
Let $(\MM,H)$ be an almost hypercomplex manifold, \ie $\MM$ is a
$4n$-dimension\-al differentiable manifold and $H=(J_1,J_2,J_3)$
is an almost hypercomplex structures on $\MM$ with the following
properties for all cyclic permutations $(\al, \bt, \gm)$ of
$(1,2,3)$:%
\begin{equation*}\label{J123} %
J_\al=J_\bt\circ J_\gm=-J_\gm\circ J_\bt, \qquad
J_\al^2=-I,
\end{equation*} %
where $I$ denotes the identity. 

Let $g$ be a neutral metric on $(\MM,H)$ having the properties
\begin{equation}\label{gJJ} %
g(\cdot,\cdot)=\ea g(J_\al \cdot,J_\al \cdot),
\end{equation} %
where
\begin{equation*}\label{epsiloni}
\ea=
\begin{cases}
\begin{array}{ll}
1, \quad & \al=1;\\
-1, \quad & \al=2;3.
\end{array}
\end{cases}
\end{equation*}

Here and further, $\alpha$ runs over the range
$\{1,2,3\}$ unless otherwise is stated.

The associated 2-form $g_1$ and the
associated neutral metrics $g_2$ and $g_3$ are determined respectively by
\begin{equation}\label{gJ} %
g_\al(\cdot,\cdot)=g(J_\al \cdot,\cdot)=-\ea g(\cdot,J_\al \cdot).
\end{equation}%

The derived structure $(H,G)=(J_1,J_2,J_3;g,g_1,g_2,g_3)$ on
$\MM^{4n}$ is called an \emph{almost hy\-per\-com\-plex
structure with Hermit\-ian-Norden metrics} and the respective manifold $(\MM,H,G)$ is called an \emph{almost hypercomplex
manifold with Hermit\-ian-Norden metrics} (\cite{GriManDim12}). 

In \cite{GriManDim12} are introduced the fundamental tensors of a manifold $(\MM,H,G)$ by the following three
$(0,3)$-tensors
\begin{equation*}\label{F'-al}
F_\al (x,y,z)=g\bigl( \left( \n_x J_\al
\right)y,z\bigr)=\bigl(\n_x g_\al\bigr) \left( y,z \right),
\end{equation*}
where $\n$ is the Levi-Civita connection generated by $g$.
These tensors have the basic properties
\begin{equation}\label{FaJ-prop}
  F_{\al}(x,y,z)=-\ea F_{\al}(x,z,y)=-\ea F_{\al}(x,J_{\al}y,J_{\al}z)
\end{equation}
and the following relations are valid for all cyclic permutations $(\al, \bt, \gm)$ of
$(1,2,3)$
\begin{equation*}\label{F1F2F3}
    F_\al(x,y,z)=F_\bt(x,J_\gm y,z)-\ea F_\gm(x,y,J_\bt z).
\end{equation*}

The corresponding Lee forms $\ta_\al$ are defined by
\begin{equation*}\label{theta-al}
\ta_\al(\cdot)=g^{kl}F_\al(e_k,e_l,\cdot)
\end{equation*}%
for an arbitrary basis $\{e_1,e_2,\dots, e_{4n}\}$ of $T_p\MM$,
$p\in \MM$.


Let us remark that, according to \eqref{gJJ}, $(M,H,G)$ is an almost Hermitian manifold with respect to $J_1$ and it is an almost complex manifold with Norden metric with respect to $J_2$ and $J_3$.
The basic classes of the mentioned two types of
manifolds are given in \cite{GrHe} and \cite{GaBo}, respectively. The special class of the K\"ahler-type manifolds $\W_0(J_\al):$ $F_\al=0$  belongs to any other class in the corresponding classification.
In the lowest 4-dimensional case, the four basic classes
of almost Hermitian manifolds with respect to
$J_1$ are restricted to two:
the class of the almost K\"ahler manifolds $\W_2(J_1)$ and
the class of the Hermitian manifolds $\W_4(J_1)$ which, for dimension 4, are determined by:
\begin{equation}\label{cl-H-dim4}
\begin{split}
&\W_2(J_1):\; \mathop{\s}_{x,y,z}\bigl\{F_1(x,y,z)\bigr\}=0; \\
&\W_4(J_1):\; F_1(x,y,z)=\dfrac{1}{2}
                \left\{g(x,y)\ta_1(z)-g(x,J_1y)\ta_1(J_1z)\right. \\
&\phantom{\W_4(J_1):\; F_1(x,y,z)=\quad\,}
                \left.-g(x,z)\ta_1(y)+g(x,J_1z)\ta_1(J_1y)
                \right\},
\end{split}
\end{equation}
where $\s $ is the cyclic sum by three
arguments $x$, $y$, $z$.
The basic classes of the almost complex manifolds with Norden metric ($\al=2$ or $3$)
are determined, for dimension $4$, as follows:
\begin{equation}\label{cl-N-dim4}
\begin{split}
&\W_1(J_\al):\; F_\al(x,y,z)=\dfrac{1}{4}\bigl\{
g(x,y)\ta_\al(z)+g(x,J_\al y)\ta_\al(J_\al z)\bigr.\\
&\phantom{\W_1(J_\al):\; F_\al(x,y,z)=\quad\,\,} %
\bigl.+g(x,z)\ta_\al(y)
    +g(x,J_\al z)\ta_\al(J_\al y)\bigr\};\\
&\W_2(J_\al):\; \mathop{\s}_{x,y,z}
\bigl\{F_\al(x,y,J_\al z)\bigr\}=0,\qquad \ta_\al=0;\\
&\W_3(J_\al):\; \mathop{\s}_{x,y,z} \bigl\{F_\al(x,y,z)\bigr\}=0.
\end{split}
\end{equation}

As it is well known, the Nijenhuis tensor in terms of the covariant derivatives of $J_\al$ is defined by
$
    N(x,y)=\left(\n_x J_\al\right)J_\al y-\left(\n_y J_\al\right)J_\al x+\left(\n_{J_\al x}
    J_\al\right)y-\left(\n_{J_\al y}
    J_\al\right)x.
$
The following properties for $N$ are valid:
\begin{equation}\label{N-prop}
    N(x,y)=-N(y,x)=-N(J_\al x,J_\al y).
\end{equation}
The corresponding (0,3)-tensor is determined as $N(x,y,z)=g\left(N(x,y),z\right)$.

Let $R=\left[\n,\n\right]-\n_{[\ ,\ ]}$ be the
curvature (1,3)-tensor of $\nabla$ and the corresponding curvature
$(0,4)$-tensor with respect to the metric $g$ be denoted by the same letter: $R(x,y,z,w)$
$=g(R(x,y)z,w)$. The following properties are valid:
\begin{equation}\label{R-prop}
\begin{array}{c}
    R(x,y,z,w)=-R(y,x,z,w)=-R(x,y,w,z), \\[4pt]
R(x,y,z,w)+R(y,z,x,w)+R(z,x,y,w)=0.
\end{array}
\end{equation}

The Ricci tensor $\rho$ and the scalar curvature $\tau$ for $R$ as well as
their associated quantities are defined as usually by:
\begin{equation*}
\begin{array}{c}
    \rho(y,z)=g^{ij}R(e_i,y,z,e_j),\qquad \rho^*(y,z)=g^{ij}R(e_i,y,z,J_\al e_j),\\[6pt]
    \tau=g^{ij}\rho(e_i,e_j), \qquad \tau^*=g^{ij}\rho^*(e_i,e_j),\qquad \tau^{**}=g^{ij}\rho^*(e_i,J_\al e_j).
\end{array}
\end{equation*}

Every non-degenerate 2-plane $\mu$ with a basis $\{x,y\}$ with respect to $g$ in
$T_pM$, $p \in M$, has the following sectional curvature
\begin{equation*}\label{sect}
k(\mu;p)=\frac{R(x,y,y,x)}{g(x,x)g(y,y)-g(x,y)^2}.
\end{equation*}
A 2-plane $\mu$ is said to be holomorphic (resp., totally real) if $\mu= J_\al \mu$ 
(resp., $\mu \perp J_\al \mu \neq \mu$) with respect to $g$ and $J_\al$.

\section{Four-dimensional indecomposable real Lie algebras and almost hy\-per\-com\-plex
structure with Hermit\-ian-Norden metrics}\label{sect-lie}
Let $L$ be a simply connected 4-dimensional real Lie group
admitting an invariant hypercomplex structure, \ie left translations by elements of $L$ are holomorphic with respect to
$J_{\al}$ for all $\al$. Let $\mathfrak{l}$ be the corresponding Lie algebra of $L$.

We define a standard hypercomplex structure on $\mathfrak{l}$ as in \cite{So}:
\begin{equation*}\label{JJJ}
\begin{array}{llll}
J_1e_1=e_2, \quad & J_1e_2=-e_1,  \quad &J_1e_3=-e_4, \quad &J_1e_4=e_3;
\\[6pt]
J_2e_1=e_3, &J_2e_2=e_4, &J_2e_3=-e_1, &J_2e_4=-e_2;
\\[6pt]
J_3e_1=-e_4, &J_3e_2=e_3, &J_3e_3=-e_2, &J_3e_4=e_1,\\[6pt]
\end{array}
\end{equation*}
where $\{e_1,e_2,e_3,e_4\}$ is a basis of a 4-dimensional real Lie
algebra $\mathfrak{l}$ with center $\z$ and derived Lie algebra
$\mathfrak{l}'=[\mathfrak{l},\mathfrak{l}]$.

We introduce a pseudo-Euclidian metric $g$ of
neutral signature by
\begin{equation*}\label{g}
g(x,y)=x^1y^1+x^2y^2-x^3y^3-x^4y^4,
\end{equation*}
where $x(x^1,x^2,x^3,x^4)$, $y(y^1,y^2,y^3,y^4) \in \mathfrak{l}$. The metric $g$
generates an almost hypercomplex structure with Hermitian-Norden metrics on $\mathfrak{l}$, according to \eqref{gJJ} and \eqref{gJ} and $(L,H,G)$ is an almost hypercomplex manifold with Hermitian-Norden metrics.

The real 4-dimensional indecomposable Lie algebras are classified for instance by Mubarakzyanov in \cite{Muba}.
They can be found easily in \cite{GhaTho}, where there are given the non-zero brackets in each class according a basis $\{e_1,e_2,e_3,e_4\}$. Now, we focus our investigations on the basic classes $g_{4,5}$ and $g_{4,6}$ of this classification which depend on two real parameters $a$ and $b$:
\begin{equation}\label{45}
g_{4,5}:\; [e_1,e_4]=e_1,  [e_2,e_4]=a e_2,  [e_3,e_4]=b e_3, \; (a\neq0, b\neq0)
\end{equation}
\begin{equation}\label{46}
g_{4,6}:\; [e_1,e_4]=a e_1,  [e_2,e_4]=b e_2-e_3,  [e_3,e_4]=e_2+b e_3, \; (a\neq0, b\geq0)
\end{equation}

Further, the indices $i$, $j$, $k$, $l$ run over the range $\{1,2,3,4\}$.

\section{The class $g_{4,5}$}\label{sect-45}
Let us consider a manifold $(L,G,H)$ with corresponding Lie algebra from $g_{4,5}$.

We compute the basic components $(F_{\al})_{ijk}=F_{\al}(e_i,e_j,e_k)$. The non-zero of them are the following and their equal ones by \eqref{FaJ-prop}
\begin{equation}\label{Fijk-45}
\begin{array}{c}
(F_{1})_{113}=(F_{2})_{112}=-\frac{1}{2}(F_{3})_{111}=1, \\[6pt]
(F_{1})_{214}=(F_{3})_{212}=-\frac{1}{2}(F_{2})_{222}=-a,\qquad
(F_{2})_{314}=(F_{3})_{313}=b.
\end{array}
\end{equation}
The only non-zero components $(\theta_{\al})_i=(\theta_{\al})(e_i)$
of the corresponding Lee forms are
\begin{equation}\label{ta-i-45}
\begin{array}{c}
(\ta_1)_3=a+1,\quad
(\ta_2)_2=2a+b+1,\quad
(\ta_3)_1=-(a+b+2).
\end{array}
\end{equation}

\begin{thm}\label{thm-45}
Let $(L,H,G)$ be an almost hypercomplex manifold with Hermitian-Norden metrics with Lie algebra from the class $g_{4,5}$. It belongs to a certain class regarding $J_\al$ given in the following table, where are distinguished the different cases concerning the parameters $a$ and $b$.\\
\begin{center}
\begin{tabular}{|c|c|c|c|c|}
  \hline
  $a$ & $b$ & $J_1$ & $J_2$ & $J_3$ \\ \hline
  $-1$ & $1$ & $\W_2$ & $\W_2$ & $\W_{1}\oplus\W_{2}\oplus\W_{3}$ \\
  $-1$ & $-1$ & $\W_2$ & $\W_{1}\oplus\W_{2}\oplus\W_{3}$ & $\W_2$ \\
  $-1$ & $\neq \pm 1$ & $\W_2$ & $\W_{1}\oplus\W_{2}\oplus\W_{3}$ & $\W_{1}\oplus\W_{2}\oplus\W_{3}$ \\
  $1$ & $1$ & $\W_4$ & $\W_1$ & $\W_{1}\oplus\W_{2}$ \\
  $1$ & $-3$ & $\W_4$ & $\W_{2}\oplus\W_{3}$ & $\W_{2}\oplus\W_{3}$ \\
  $1$ & $\neq1$; $\neq-3$ & $\W_4$ & $\W_{1}\oplus\W_{2}\oplus\W_{3}$ & $\W_{1}\oplus\W_{2}\oplus\W_{3}$ \\
  $\neq \pm 1$ & $1$ & $\W_{2}\oplus\W_{4}$ & $\W_{1}\oplus\W_{2}$ & $\W_{1}\oplus\W_{2}\oplus\W_{3}$ \\
  $-\frac{1}{3}$ & $-\frac{1}{3}$ & $\W_{2}\oplus\W_{4}$ & $\W_{2}\oplus\W_{3}$ & $\W_{1}\oplus\W_{2}$ \\
  $-\frac{1}{2}(b+1)$ & $\neq -3$; $\neq -\frac{1}{3}$ & $\W_{2}\oplus\W_{4}$ & $\W_{2}\oplus\W_{3}$ & $\W_{1}\oplus\W_{2}\oplus\W_{3}$ \\
  $b$ & $\neq \pm 1$; $\neq -\frac{1}{3}$ & $\W_{2}\oplus\W_{4}$ & $\W_{1}\oplus\W_{2}\oplus\W_{3}$ & $\W_{1}\oplus\W_{2}$ \\
  $-b-2$ & $\neq -1$; $\neq -3$ & $\W_{2}\oplus\W_{4}$ & $\W_{1}\oplus\W_{2}\oplus\W_{3}$ & $\W_{2}\oplus\W_{3}$ \\
  $\neq 0$ & $\neq 0$ & $\W_{2}\oplus\W_{4}$ & $\W_{1}\oplus\W_{2}\oplus\W_{3}$ & $\W_{1}\oplus\W_{2}\oplus\W_{3}$ \\
  \hline
\end{tabular}
\end{center}
\end{thm}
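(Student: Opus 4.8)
The plan is to verify, for each of the twelve parameter cases listed in the table, that the components $(F_\al)_{ijk}$ from \eqref{Fijk-45} together with the Lee forms $(\ta_\al)_i$ from \eqref{ta-i-45} satisfy exactly the defining identities of the claimed class (and fail those of every strictly smaller class). Since the structure group decomposition is a pointwise linear-algebraic condition on the tensor $F_\al$ and, for a left-invariant structure on a Lie group, $F_\al$ has constant components, the whole argument reduces to checking finitely many polynomial identities in $a$ and $b$. I would first record, from the general classification, the class-membership criteria specialized to dimension $4$ in terms of components: for $J_1$ the Hermitian condition $\W_4(J_1)$ is exactly \eqref{cl-H-dim4} written out on the basis, and $\W_2(J_1)$ is the vanishing of the cyclic sum; for $J_\al$ with $\al=2,3$ the three conditions in \eqref{cl-N-dim4} translate into (i) $\W_1$: $F_\al$ equals the explicit expression built from $\ta_\al$, (ii) $\W_2$: cyclic sum of $F_\al(x,y,J_\al z)$ vanishes and $\ta_\al=0$, (iii) $\W_3$: cyclic sum of $F_\al(x,y,z)$ vanishes; the listed direct sums are then the unions of these linear subspaces, so membership in, say, $\W_1\oplus\W_2$ means $F_\al$ lies in the span and is detected by the vanishing of the complementary projection. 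Using \eqref{FaJ-prop} and \eqref{gJ} to reduce the number of independent components, these become a small explicit linear system.

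Next I would compute, once and for all, the three relevant scalar obstructions as functions of $(a,b)$: the $\W_2(J_1)$-obstruction (cyclic sum of $F_1$), the $\W_4(J_1)$-obstruction (difference of $F_1$ and its $\ta_1$-model), and for $\al=2,3$ both the $\W_1$-obstruction ($F_\al$ minus its $\ta_\al$-model) and the "cyclic-sum" obstruction governing the $\W_3$ (and $\W_2$) part. Plugging in \eqref{Fijk-45} and \eqref{ta-i-45}, each obstruction is a linear or bilinear polynomial in $a,b$; for instance the $\W_4(J_1)$ condition will force the single relation that makes $F_1$ proportional to the Lee-form model, which should come out as an identity valid for all $(a,b)$ (hence $J_1$ is always Hermitian-type, i.e. in $\W_2\oplus\W_4$), while the $\W_2(J_1)$ part vanishes precisely when $a=1$, explaining the split $a=1\Rightarrow\W_4$, $a=-1\Rightarrow\W_2$ (since then $\ta_1=0$), and otherwise $\W_2\oplus\W_4$. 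Analogously, for $J_2$ the $\W_1$-obstruction should vanish iff $a=b=1$; the $\ta_2=0$ condition (relevant for landing in $\W_2\oplus\W_3$) holds iff $2a+b+1=0$; the full cyclic-sum obstruction vanishes iff one of the listed linear relations among $a,b$ holds (this is what produces the rows with $\W_1\oplus\W_2$ or $\W_2\oplus\W_3$), and similarly for $J_3$ with the role of $a+b+2$ and the relation $a=b$. The twelve rows of the table are then just the Boolean combinations of "which of these polynomial conditions hold," and one checks they are mutually exclusive and exhaustive over $\{a\neq0,\,b\neq0\}$.

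Concretely the steps are: (1) write the dimension-$4$ membership criteria of \eqref{cl-H-dim4}–\eqref{cl-N-dim4} as vanishing of explicit components, using \eqref{FaJ-prop} to list the independent ones; (2) substitute \eqref{Fijk-45}, \eqref{ta-i-45} and reduce each criterion to a polynomial equation in $a,b$ — obtaining the critical loci $a=\pm1$, $b=\pm1$, $b=-3$, $a=-\tfrac12(b+1)$, $a=b$, $a=-b-2$, and their intersections such as $a=b=-\tfrac13$; (3) for each of the twelve parameter regimes, assemble which criteria hold to identify the exact class for each $J_\al$, and check that no smaller class (in particular never the K\"ahler class $\W_0$, which would need $F_\al\equiv0$, impossible here since some component in \eqref{Fijk-45} is a nonzero constant whenever $a,b\neq0$) is attained; (4) confirm the case list partitions the parameter plane minus the coordinate axes. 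The main obstacle is purely bookkeeping: correctly translating the coordinate-free class definitions into component identities with the right symmetrizations (the factors $\tfrac12$ and $\tfrac14$ and the signs $\ea$ in \eqref{cl-H-dim4}–\eqref{cl-N-dim4} must be tracked carefully), and then being exhaustive about the intersections of the critical loci so that every row — especially the degenerate coincidences like $a=b=-\tfrac13$ (which is simultaneously $a=-\tfrac12(b+1)$ and $a=b$) — is placed in the correct, most restrictive class rather than a larger one.
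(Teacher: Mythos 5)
Your proposal is correct and follows essentially the same route as the paper: the paper's proof likewise consists of substituting the computed components \eqref{Fijk-45} and Lee forms \eqref{ta-i-45} into the dimension-4 classification conditions \eqref{cl-H-dim4} and \eqref{cl-N-dim4} and checking the resulting polynomial conditions in $a$ and $b$ case by case. Your added care about the critical loci (e.g. $a=\pm1$, $b=1$, $2a+b+1=0$, $a=b$, $a+b+2=0$) and about placing degenerate coincidences in the smallest class is exactly the bookkeeping the paper leaves implicit.
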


\begin{proof}
Using the results in \eqref{Fijk-45}, \eqref{ta-i-45} and the classification conditions
\eqref{cl-H-dim4}, \eqref{cl-N-dim4} for dimension 4, we proof the assertion in each case.
\end{proof}

The non-zero basic components $(N_{\al})_{ijk}=N_{\al}(e_i,e_j,e_k)$ of the Nijenhuis tensor $N$ for $J_\al$ are
\begin{equation}\label{N-45}
\begin{array}{c}
(N_{1})_{132}=(N_{1})_{231}=1-a, \quad (N_{2})_{123}=(N_{2})_{231}=1-b,\\[6pt] (N_{3})_{123}=(N_{3})_{132}=a-b
\end{array}
\end{equation}
and the rest of them are calculated using \eqref{N-prop}.

There are computed the basic components $R_{ijkl}=R(e_i,e_j,e_k,e_l)$ of
the curvature tensor $R$. The non-zero of them are the following and their equal ones by \eqref{R-prop}
\begin{equation}\label{R-45}
\begin{array}{c}
R_{1221}=a, \quad R_{1313}=b, \quad R_{1414}=1, \\[6pt]
R_{2323}=ab, \quad R_{2424}=a^2, \quad R_{3443}=b^2.
\end{array}
\end{equation}

We obtain the basic components $\rho_{jk}=\rho(e_j,e_k)$, $(\rho_\al^*)_{jk}=\rho_\al^*(e_j,e_k)$, the values of $\tau$, $\tau_\al^*$, $\tau_\al^{**}$ and $k_{ij}=k(e_i,e_j)$:
\begin{equation}\label{rho-tau-45}
\begin{array}{c}
\rho_{11}=a+b+1, \quad \rho_{22}=a(a+b+1), \\[6pt]
\rho_{33}=-b(a+b+1), \quad \rho_{44}=-(a^2+b^2+1),  \\[6pt]
(\rho_1^*)_{12}=a, \quad (\rho_1^*)_{34}=b^2, \quad (\rho_2^*)_{13}=b, \\[6pt] (\rho_2^*)_{24}=a^2, \quad (\rho_3^*)_{14}=-1, \quad (\rho_3^*)_{23}=ab,\\[6pt]
\tau=2(a^2+b^2+ab+a+b+1), \quad \tau_1^*=\tau_2^*=\tau_3^*=0, \\[6pt] \tau_1^{**}=2(a+b^2), \quad \tau_2^{**}=2(a^2+b), \quad \tau_3^{**}=2(ab+1), \\[6pt]
k_{12}=a, \quad k_{13}=b, \quad k_{14}=1, \\[6pt]
k_{23}=ab, \quad k_{24}=a^2, \quad k_{34}=b^2.
\end{array}
\end{equation}


\begin{thm}\label{thm-45-2}
Let $(L,H,G)$ be an almost hypercomplex manifold with Hermitian-Norden metrics with Lie algebra from the class $g_{4,5}$. The following characteristics are valid:
\begin{enumerate}
  \item An $(L,H,G)$ has vanishing Nijenhuis tensor if and only if $a=b=1$;
  \item Every $(L,H,G)$ is non-flat;
  \item Every $(L,H,G)$ has a positive scalar curvature;
  \item Every $(L,H,G)$ is $*$-scalar flat;
  \item An $(L,H,G)$ is $**$-scalar flat w.r.t. $J_1$ if and only if $a=-b^2$;
  \item An $(L,H,G)$ is $**$-scalar flat w.r.t. $J_2$ if and only if $b=-a^2$;
  \item An $(L,H,G)$ is $**$-scalar flat w.r.t. $J_3$ if and only if $ab=-1$;
  \item An $(L,H,G)$ has a positive basic holomorphic sectional curvatures w.r.t. $J_1$ if and only if $a>0$;
  \item An $(L,H,G)$ has a positive basic holomorphic sectional curvatures w.r.t. $J_2$ if and only if $b>0$;
  \item An $(L,H,G)$ has a positive basic holomorphic sectional curvatures w.r.t. $J_3$ if and only if $ab>0$;
  \item An $(L,H,G)$ has a positive basic totally real sectional curvatures if and only if $a>0$ and $b>0$.
\end{enumerate}
\end{thm}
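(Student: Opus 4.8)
The plan is to read off all eleven assertions directly from the explicit component tables \eqref{N-45}, \eqref{R-45} and \eqref{rho-tau-45}, which have already been computed from the structure equations \eqref{45} and the standard hypercomplex structure, together with the recurrence \eqref{N-prop} for the remaining Nijenhuis components and \eqref{R-prop} for the remaining curvature components. So the whole proof reduces to a collection of elementary equivalences, and there is no genuine obstacle beyond careful bookkeeping; the only place requiring a small argument is item (2), where one must rule out the simultaneous vanishing of all curvature components.

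First I would dispose of the purely algebraic items. For (1), the Nijenhuis tensor $N_\al$ vanishes for every $\al$ exactly when all the coefficients in \eqref{N-45} vanish, i.e. $1-a=0$, $1-b=0$ and $a-b=0$; since $(1-a)=(1-b)=0$ already forces $a-b=0$, this is equivalent to $a=b=1$. For (4), I observe from \eqref{rho-tau-45} that $\tau_1^*=\tau_2^*=\tau_3^*=0$ identically, with no restriction on $a$, $b$, which gives $*$-scalar flatness for every member of the class. For (5)--(7), I simply equate the three $**$-scalar curvatures in \eqref{rho-tau-45} to zero: $\tau_1^{**}=2(a+b^2)=0\iff a=-b^2$; $\tau_2^{**}=2(a^2+b)=0\iff b=-a^2$; $\tau_3^{**}=2(ab+1)=0\iff ab=-1$.

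Next I would handle the curvature-sign statements. For (3), the scalar curvature is $\tau=2(a^2+b^2+ab+a+b+1)$; writing $a^2+ab+b^2=\tfrac12\!\left(a^2+b^2+(a+b)^2\right)$ and then completing the square, one checks $\tau=\left(a+\tfrac12\right)^2+\left(b+\tfrac12\right)^2+\tfrac12(a+b+1)^2+\tfrac12>0$ for all real $a$, $b$, so $\tau$ is always positive; in particular $\tau\neq0$, which also re-proves that the manifold is never flat, settling (2). (Alternatively, for (2) it suffices to note that $R_{1414}=1\neq0$ in \eqref{R-45}, so $R$ is never identically zero regardless of $a$, $b$.) For (8)--(10), the basic holomorphic sectional curvatures with respect to $J_\al$ are the sectional curvatures of the $J_\al$-invariant coordinate $2$-planes; inspecting which pairs $\{e_i,J_\al e_i\}$ occur and using the table $k_{12}=a$, $k_{13}=b$, $k_{14}=1$, $k_{23}=ab$, $k_{24}=a^2$, $k_{34}=b^2$, one finds that positivity of all the $J_1$-holomorphic values is governed by $a>0$ (the value $a$ together with $b^2,a^2\geq 0$ and $1>0$), positivity of all $J_2$-holomorphic values by $b>0$, and positivity of all $J_3$-holomorphic values by $ab>0$. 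Finally, for (11) the basic totally real sectional curvatures are the remaining coordinate $2$-plane curvatures, and requiring all of $a$, $b$, $ab$, $a^2$, $b^2$, $1$ to be positive is equivalent to $a>0$ and $b>0$. This exhausts the list, and in each item the stated condition has been shown to be necessary and sufficient.
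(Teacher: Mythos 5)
Your overall route is the same as the paper's: Theorem~\ref{thm-45-2} is proved there by direct inspection of the component tables \eqref{N-45}, \eqref{R-45} and \eqref{rho-tau-45}, exactly as you propose, and items (1), (2), (4)--(11) of your verification are correct. (Two small remarks: in (8) and (9) you implicitly use that $a\neq0$ and $b\neq0$ in the class $g_{4,5}$, so that $a^2,b^2>0$; and the $J_1$-holomorphic coordinate planes are only $\mathrm{span}\{e_1,e_2\}$ and $\mathrm{span}\{e_3,e_4\}$, with curvatures $a$ and $b^2$ --- the values $a^2$ and $1$ you mention in that parenthesis belong to planes that are not $J_1$-invariant, though this does not affect the stated criteria.)

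The one step that fails as written is the completed-square identity in item (3): it is not true that $2(a^2+b^2+ab+a+b+1)=\left(a+\tfrac12\right)^2+\left(b+\tfrac12\right)^2+\tfrac12(a+b+1)^2+\tfrac12$; for instance at $a=b=-1$ the left-hand side is $4$ while your right-hand side is $\tfrac32$ (the two sides differ by $\tfrac12(a+b)^2+\tfrac12$). The conclusion $\tau>0$ is nevertheless correct, and the fix is immediate: $\tau=2(a^2+b^2+ab+a+b+1)=(a+1)^2+(b+1)^2+(a+b)^2$, and these three squares cannot vanish simultaneously (that would force $a=b=-1$ together with $a+b=0$), so $\tau>0$ for all admissible $a,b$. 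With this correction your argument is complete and coincides in substance with the paper's proof, which simply cites \eqref{N-45}, \eqref{R-45} and \eqref{rho-tau-45} without writing out the case-by-case checks you supply.
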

\begin{proof}
By virtue of \eqref{N-45}, \eqref{R-45} and \eqref{rho-tau-45}, we establish the truthfullness of the statement.
\end{proof}

\section{The class $g_{4,6}$}\label{sect-46}
In this section we focus our investigations on a manifold $(L,G,H)$ with corresponding Lie algebra from $g_{4,6}$.

By similar way as the previous section, we obtain the following results:
\begin{equation}\label{F-ta-46}
\begin{array}{c}
(F_{1})_{323}=(F_{2})_{223}=-(F_{3})_{213}=(F_{3})_{334}=\frac{1}{2}(F_{2})_{322}=1\\[6pt]
(F_{1})_{223}=(F_{2})_{314}=(F_{3})_{234}=(F_{3})_{313}=\frac{1}{2}(F_{2})_{222}=b,\\[6pt]
(F_{1})_{113}=(F_{2})_{112}=-\frac{1}{2}(F_{3})_{111}=a,\\[6pt]
(\ta_1)_2=(\ta_2)_3=-\frac{1}{2}(\ta_3)_4=1,\\[6pt]
(\ta_1)_3=-\frac{1}{2}(\ta_3)_1=a+b, \quad (\ta_2)_2=a+3b, \\[6pt]
\end{array}
\end{equation}
\begin{equation}\label{res-46}
\begin{array}{c}
(N_{1})_{132}=(N_{1})_{231}=(N_{2})_{123}=(N_{2})_{231}=a-b, \\[6pt]
(N_{1})_{133}=(N_{1})_{234}=-(N_{2})_{122}=-(N_{2})_{144}=-1,\\[6pt]
R_{1221}=R_{1313}=ab, \quad R_{1231}=a, \quad R_{1414}=a^2, \quad R_{2323}=b^2+1,\\[6pt]
R_{2424}=b^2-1, \quad R_{2434}=2b, \quad R_{3443}=2-b^2, \\[6pt]
\rho_{11}=a(a+2b), \quad \rho_{22}=-\rho_{33}=b(a+2b), \\[6pt] \rho_{23}=a+2b, \quad \rho_{44}=-(a^2+2b^2-2),\\[6pt]
(\rho_1^*)_{13}=-(\rho_2^*)_{12}=\frac{1}{2}(\rho_3^*)_{11}=a, \quad
(\rho_1^*)_{24}=-(\rho_2^*)_{34}=\frac{1}{2}(\rho_3^*)_{44}=-2b,\\[6pt]
(\rho_1^*)_{12}=(\rho_2^*)_{13}=ab, \quad (\rho_1^*)_{34}=(\rho_2^*)_{24}=b^2-1, \\[6pt] (\rho_3^*)_{23}=b^2+1, \quad (\rho_3^*)_{14}=-a^2,\\[6pt]
\tau=2(a^2+3b^2+2ab-1), \quad \tau_1^*=\tau_2^*=0, \quad \tau_3^*=2(a+2b), \\[6pt]
\tau_1^{**}=\tau_2^{**}=2(b^2+ab-1), \quad \tau_3^{**}=2(a^2+b^2+1), \\[6pt]
k_{12}=k_{13}=ab, \quad k_{14}=a^2, \quad k_{23}=b^2+1, \quad k_{24}=k_{34}=b^2-1
\end{array}
\end{equation}
and the rest nonzero of them we calculate using \eqref{FaJ-prop}, \eqref{N-prop} and \eqref{R-prop}.

\begin{thm}\label{thm-45}
Let $(L,H,G)$ be an almost hypercomplex manifold with Hermitian-Norden metrics with Lie algebra from the class $g_{4,6}$. Then $(L,H,G)$ belongs to the class
\[
(\W_2\oplus\W_4)(J_1) \cap (\W_1\oplus\W_2\oplus\W_3)(J_2) \cap (\W_1\oplus\W_2)(J_3)
\]
for each $a\neq0$, $b\geq0$. Moreover, $(L,H,G)$ does not belong to neither of $\W_2$, $\W_4$ for $J_1$; $\W_1$, $\W_2$, $\W_3$, $\W_1\oplus\W_2$, $\W_1\oplus\W_3$, $\W_2\oplus\W_3$ for $J_2$; $\W_1$, $\W_2$ for $J_3$.
\end{thm}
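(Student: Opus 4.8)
The plan is to read off the components of the fundamental tensors $F_\al$ and of the Lee forms $\ta_\al$ from \eqref{F-ta-46} and to substitute them into the dimension-four membership criteria \eqref{cl-H-dim4} and \eqref{cl-N-dim4}. Two structural remarks cut the work down. First, in dimension four the only nonzero summands for an almost Hermitian structure are the $\W_2$- and the $\W_4$-one, so the whole space of admissible $(0,3)$-tensors for $J_1$ is $\W_2\oplus\W_4$; likewise, for the Norden structures $J_2,J_3$ the whole space is $\W_1\oplus\W_2\oplus\W_3$. Hence the inclusions $(L,H,G)\in(\W_2\oplus\W_4)(J_1)$ and $(L,H,G)\in(\W_1\oplus\W_2\oplus\W_3)(J_2)$ are automatic, and the only inclusion that actually has to be verified is $F_3\in(\W_1\oplus\W_2)(J_3)$. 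Second, in both classifications the Lee form is carried entirely by the conformal summand ($\W_4$ for $J_1$, $\W_1$ for $J_2,J_3$), so $\ta_\al\neq0$ already certifies a nonzero component there, while $\ta_\al=0$ is necessary for membership in any of $\W_2$, $\W_3$, $\W_2\oplus\W_3$.

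For the remaining inclusion $F_3\in(\W_1\oplus\W_2)(J_3)$ I would show that the $\W_3$-projection of $F_3$ vanishes, i.e. that $\mathop{\s}_{x,y,z}F_3(x,y,z)$ equals the cyclic sum of the $\W_1$-model tensor built from $\ta_3$ through the first line of \eqref{cl-N-dim4}; by \eqref{FaJ-prop} this reduces to comparing the finitely many basic components listed in \eqref{F-ta-46}.

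For the six non-inclusions I would argue componentwise. With respect to $J_1$: since $(\ta_1)_2=1\neq0$ the $\W_4$-part is nontrivial, hence $(L,H,G)\notin\W_2(J_1)$; and the $\W_4$-model of \eqref{cl-H-dim4} evaluated on $(e_3,e_2,e_3)$ returns $\tfrac{1}{2}$, whereas $(F_1)_{323}=1$, so the $\W_2$-part is nontrivial and $(L,H,G)\notin\W_4(J_1)$. With respect to $J_2$: $(\ta_2)_3=1\neq0$ simultaneously rules out $\W_2$, $\W_3$ and $\W_2\oplus\W_3$ (all of which have vanishing Lee form); then the $\W_1$-model of \eqref{cl-N-dim4} fails to reproduce $F_2$ at a suitable index triple, so both the $\W_2$- and the $\W_3$-component of $F_2$ are nonzero — concretely, $\mathop{\s}_{x,y,z}F_2(x,y,z)$ differs from its $\ta_2$-model, which excludes $\W_1\oplus\W_2$ and $\W_1$, and $\mathop{\s}_{x,y,z}F_2(x,y,J_2z)$ differs from its $\ta_2$-model, which excludes $\W_1\oplus\W_3$. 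With respect to $J_3$: $(\ta_3)_4=-2\neq0$ excludes $\W_2$, and the $\W_1$-model of \eqref{cl-N-dim4} disagrees with $F_3$ at a component (so the $\W_2$-part is nonzero), whence $(L,H,G)\notin\W_1(J_3)$.

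I expect the main obstacle to be bookkeeping rather than an idea: one must pin down the explicit dimension-four projections onto the Gray--Hervella summands (for $J_1$) and the Ganchev--Borisov summands (for $J_2,J_3$), and in the Norden case separate the $\W_2$- from the $\W_3$-component of $F_2$ correctly — that is, handle the two cyclic sums $\mathop{\s}_{x,y,z}F_2(x,y,z)$ and $\mathop{\s}_{x,y,z}F_2(x,y,J_2z)$ and subtract the $\ta_2$-model without a sign or index slip — because this is exactly where the distinction between $\W_1\oplus\W_2$ and $\W_1\oplus\W_3$ is decided. Once those projections are fixed, every step reduces to substituting \eqref{F-ta-46} into \eqref{cl-H-dim4}--\eqref{cl-N-dim4} and evaluating on the basis $\{e_1,e_2,e_3,e_4\}$.
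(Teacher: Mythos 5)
Your overall strategy (substitute the components \eqref{F-ta-46} into the dimension-four conditions \eqref{cl-H-dim4}, \eqref{cl-N-dim4}) is exactly the paper's verification, and your treatment of the trivial inclusions, of the Lee-form exclusions, and of the $\W_4(J_1)$ exclusion is fine. However, there is a genuine error precisely at the one inclusion you correctly identify as nontrivial, namely $F_3\in(\W_1\oplus\W_2)(J_3)$. You propose to certify it by showing that $\mathop{\s}_{x,y,z}F_3(x,y,z)$ equals the cyclic sum of the $\W_1$-model built from $\ta_3$, calling this "vanishing of the $\W_3$-projection". By the paper's own conventions in \eqref{cl-N-dim4}, the cyclic sum $\mathop{\s}_{x,y,z}F(x,y,z)$ vanishes identically on $\W_3$ (that is the defining condition of $\W_3$) and is injective on $\W_2$; hence the identity $\mathop{\s}_{x,y,z}F_3(x,y,z)=\mathop{\s}_{x,y,z}(\mathrm{model}_{\ta_3})(x,y,z)$ kills the $\W_2$-component, i.e.\ it characterizes $\W_1\oplus\W_3$, not $\W_1\oplus\W_2$. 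The correct test for $\W_1\oplus\W_2$ is $\mathop{\s}_{x,y,z}F_3(x,y,J_3z)=0$, equivalently the vanishing of the Nijenhuis tensor $N_3$ (which is exactly what \eqref{res-46} and Theorem~\ref{thm-46-2}(1) record: no nonzero components of $N_3$ appear). Since the manifold here lies properly in $(\W_1\oplus\W_2)(J_3)$ (it is excluded from $\W_1$ and $\W_2$), it is \emph{not} in $(\W_1\oplus\W_3)(J_3)$, so the identity you plan to verify is in fact false for generic $a$, $b$; carried out literally, your argument would either break down or lead you to reject the stated inclusion.

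The same interchange of the two cyclic-sum tests appears in your $J_2$ exclusions: $\mathop{\s}_{x,y,z}F_2(x,y,z)\neq\mathop{\s}_{x,y,z}(\mathrm{model}_{\ta_2})(x,y,z)$ shows the $\W_2$-component is nonzero and hence excludes $\W_1\oplus\W_3$, while $\mathop{\s}_{x,y,z}F_2(x,y,J_2z)\neq0$ (the model's contribution to this sum is zero) shows the $\W_3$-component is nonzero and excludes $\W_1\oplus\W_2$ — you have attached each test to the wrong class. There this is harmless because you perform both tests and the union of exclusions is unchanged, but the labels must be corrected; for $J_3$ the swap is fatal as explained above. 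With the roles of the two cyclic sums restored (``$J$-inserted sum $\leftrightarrow$ integrability $\leftrightarrow$ $\W_1\oplus\W_2$'', ``plain sum versus the $\ta$-model $\leftrightarrow$ $\W_1\oplus\W_3$''), the rest of your componentwise bookkeeping does reproduce the paper's proof.
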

\begin{proof}
Using the results in \eqref{F-ta-46} and the classification conditions
\eqref{cl-H-dim4}, \eqref{cl-N-dim4} for dimension 4, we proof the assertions.
\end{proof}

\begin{thm}\label{thm-46-2}
Let $(L,H,G)$ be an almost hypercomplex manifold with Hermitian-Norden metrics with Lie algebra from the class $g_{4,6}$. The following characteristics are valid:
\begin{enumerate}
  \item Every $(L,H,G)$ has vanishing Nijenhuis tensor w.r.t. $J_3$;
  \item Every $(L,H,G)$ is non-flat;
  \item An $(L,H,G)$ is scalar flat if and only if $a=-b \pm \sqrt{1-2b^2}$, $(b\in[0;\frac{\sqrt{2}}{2}])$;
  \item Every $(L,H,G)$ is $*$-scalar flat w.r.t. $J_1$ and $J_2$;
  \item An $(L,H,G)$ is $*$-scalar flat w.r.t. $J_3$ if and only if $a=-2b$;
  \item An $(L,H,G)$ is $**$-scalar flat w.r.t. $J_1$ and $J_2$ if and only if $a=b^{-1}-b$, $(b\neq0)$;
  \item Every $(L,H,G)$ has a positive $**$-scalar curvature w.r.t. $J_3$;
  \item An $(L,H,G)$ has positive (resp., negative) basic holomorphic sectional curvatures w.r.t. $J_1$ and $J_2$ if and only if $a>0$ and $b>1$ (resp., $a<0$ and $0<b<1$);
  \item Every $(L,H,G)$ has a positive basic holomorphic sectional curvatures w.r.t. $J_3$;
  \item An $(L,H,G)$ has positive basic totally real sectional curvatures w.r.t. $J_1$ and $J_2$ if and only if $a>0$ and $b>1$;
  \item An $(L,H,G)$ has positive (resp., negative) basic totally real sectional curvatures w.r.t. $J_3$ if and only if $a>0$ and $b>1$ (resp., $a<0$ and $0<b<1$).
\end{enumerate}
\end{thm}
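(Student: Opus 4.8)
The plan is to substitute the explicit components gathered in \eqref{F-ta-46} and \eqref{res-46} into the relevant definitions from Section~\ref{sect-prel} and to verify each of the eleven items by elementary algebra, with $a$ and $b$ treated as parameters subject to $a\neq 0$ and $b\geq 0$. For (1) I would note that among the Nijenhuis components recorded in \eqref{res-46} only those of $N_1$ and $N_2$ occur, so by \eqref{N-prop} this forces $N_3\equiv 0$, which is the claim (and is consistent with the membership $(L,H,G)\in(\W_1\oplus\W_2)(J_3)$ from the preceding theorem). For (2) it suffices to exhibit a single nonzero curvature component: from \eqref{res-46}, $R_{2323}=b^2+1\geq 1>0$ for every admissible $b$, so $R\neq 0$ identically.

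For the scalar-curvature statements (3)--(7) I would simply equate the corresponding quantity from \eqref{res-46} to zero, or inspect its sign. Items (4) and (7) are immediate, since $\tau_1^*=\tau_2^*=0$ and $\tau_3^{**}=2(a^2+b^2+1)>0$ identically; item (5) reduces to $\tau_3^*=2(a+2b)=0$, i.e. $a=-2b$; item (6) reduces to $\tau_1^{**}=\tau_2^{**}=2(b^2+ab-1)=0$, i.e. $a=b^{-1}-b$ with $b\neq 0$. Only (3) needs a word: $\tau=2\bigl(a^2+2ab+3b^2-1\bigr)=0$ is a quadratic in $a$ with discriminant $4(1-2b^2)$, so real solutions exist precisely for $b\in\bigl[0,\tfrac{\sqrt 2}{2}\bigr]$, and then $a=-b\pm\sqrt{1-2b^2}$.

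For the sectional-curvature statements (8)--(11) the preliminary step is to determine, for each $J_\al$, which coordinate $2$-planes $\{e_i,e_j\}$ are holomorphic and which are totally real, using the standard hypercomplex structure and the definitions recalled in Section~\ref{sect-prel}. For example $\{e_1,e_2\}$ and $\{e_3,e_4\}$ are $J_1$-holomorphic while $\{e_1,e_3\}$, $\{e_1,e_4\}$, $\{e_2,e_3\}$, $\{e_2,e_4\}$ are $J_1$-totally real, and analogously for $J_2$ and $J_3$; in every case the relevant sectional curvatures $k_{ij}$ from \eqref{res-46} take values among $ab$, $a^2$, $b^2+1$, $b^2-1$. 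Each assertion then becomes a sign condition: the values $a^2$ and $b^2+1$ are always positive (recall $a\neq 0$), which yields the ``always positive'' statements (9) and the holomorphic $J_3$ case; positivity of all the remaining curvatures amounts to $ab>0$ and $b^2-1>0$, i.e. (since $b\geq 0$) $a>0$ and $b>1$, which gives the positivity parts of (8), (10), (11); reversing these inequalities, with $b\neq 0$ so that $ab\neq 0$, gives the negativity parts.

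I do not expect a genuine obstacle: the computation is entirely the careful enumeration of the holomorphic and totally real coordinate planes for $J_1,J_2,J_3$ and the bookkeeping of the four curvature values against the constraints $a\neq 0$, $b\geq 0$. The one point to watch is that $b=0$ must be excluded in every ``negative'' alternative of (8) and (11), since then $ab=0$ and the relevant holomorphic or totally real plane is flat rather than negatively curved; likewise in (6) the parametrization requires $b\neq 0$, and in (3) the admissible range for $b$ is dictated by the discriminant condition.
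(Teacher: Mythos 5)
Your proposal is correct and takes essentially the same approach as the paper, whose proof simply invokes the computed quantities in \eqref{res-46}; you merely spell out the details (the quadratic in $a$ for item (3), the enumeration of holomorphic and totally real coordinate planes for each $J_\al$, and the sign bookkeeping under $a\neq 0$, $b\geq 0$), all of which check out against the listed values.
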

\begin{proof}
By virtue of \eqref{res-46}, we establish the truthfulness of the statement.
\end{proof}

\end{document}